\def\id{{\rm id}}
\def\Oc{{\mathcal O}}
\title{A parametrization of sheets of conjugacy classes in bad characteristic}
\newtheorem{theorem}{Theorem}[section]
\newtheorem{corollary}[theorem]{Corollary}
\newtheorem{proposition}[theorem]{Proposition}
\newtheorem{remark}[theorem]{Remark}
\author{Filippo Ambrosio, Giovanna Carnovale, Francesco Esposito\\
Dipartimento di Matematica ``Tullio Levi-Civita''\\
Torre Archimede - via Trieste 63 - 35121 Padova - Italy\\
email: ambrosio@math.unipd.it,  \\
carnoval@math.unipd.it, esposito@math.unipd.it}
\date{}
\begin{document}
\maketitle
\begin{abstract}
Let $G$ be a simple algebraic group of adjoint type over an algebraically closed field $k$ of bad characteristic. We show that its sheets of conjugacy classes are parametrized by $G$-conjugacy classes of pairs $(M,\Oc)$ where $M$ is the identity component of the centralizer of a semisimple element in $G$ and $\Oc$ is a rigid unipotent conjugacy class in $M$, in analogy with  the good characteristic  case.
We explicitly describe the possible choices for $M$.
\end{abstract}

\section{Introduction}

Sheets in a reductive algebraic group $G$ are the irreducible components of the locally closed subsets of $G$ consisting of conjugacy classes of the same dimension. They occur also as irreducible components of the strata the partition of $G$, defined in \cite{lustrata} in terms of Springer representations with trivial local system, \cite{gio-MR, gio-arc}. One of the most fascinating features of strata is that they are  parametrized by a family of irreducible representations of the Weyl group which depends on the root system of $G$ and not on the characteristic. It is therefore of interest to figure out the behaviour of the irreducible components of strata when the characteristic of the base field varies. 

A description of sheets in good characteristic, and a parametrization of sheets in terms of $G$-conjugacy classes of triples $(M, Z(M)^\circ s, \Oc)$ where $M$ is the identity component of the centralizer of a semisimple element in $G$, $Z(M)^\circ s$ is a suitable coset in the component group $Z(M)/Z(M)^\circ$, and $\Oc$ is a rigid unipotent conjugacy class in $M$ was given in \cite{gio-espo} in good characteristic, and extended to the case of bad characteristic in \cite{simion}.  A refinement of this parametrization in terms of pairs $(M,\Oc)$ where $M$ and $\Oc$ are as above was given in \cite{gio-arc} under the assumption that $G$ is simple of adjoint type and the characteristic of the base field is good for $G$. 
The present paper  answers a question by G. Lusztig on the extension to arbitrary characteristic of this parametrization of sheets. 

Observe that, even if the formulation of the statement is the same, the collection of possible  centralizers of a semisimple element in $G$ varies with the characteristic of the base field, as well as the collection of unipotent conjugacy classes. Centralizers of semisimple elements are fewer in bad characteristic than in good characteristic, \cite{carter, carter-classical,der} whilst the number of unipotent conjugacy classes may increase when passing from good to bad  characteristic.

We therefore elaborate upon  results in \cite{carter, carter-classical,der} in the spirit of \cite{mcninch-sommers} in order to provide a combinatorial description of the root systems of centralizers of semisimple elements, which will allow us to retrieve most ingredients that were necessary for the proof of \cite[Theorem 4.1]{gio-arc}. This will allow us to show that the number of sheets in type $G_2$ is the same  in good  characteristic and in characteristic $3$, but it is smaller in characteristic $2$. 
In \cite{lustrata} G. Lusztig defined sets of representations of the Weyl group ${\mathcal S}_2(W)$ and ${\mathcal S}_2^p(W)$ for every prime $p$.  We wonder whether for those primes $p$ for which ${\mathcal S}_2(W)={\mathcal S}_2^p(W)$, the number of sheets for $G$ simple of adjoint type in good characteristic  and in characteristic $p$ are equal. 

\section{Notation}Let $G$ be a connected reductive algebraic group defined over an algebraically closed field $k$ of characteristic exponent $p$. Let $\Phi$ be the root system of $G$ and $\Delta=\{\alpha_1,\,\ldots,\,\alpha_n\}$ be a fixed base of $\Phi$. If $\Phi$ is irreducible,  the numbering of simple roots will be as in \cite{bourbaki} and we will denote by $\alpha_0$ the opposite of the highest root in $\Phi$ and by $d_i$  the coefficient of $\alpha_i$ in the expression of $-\alpha_0$. We set $d_0:=1$ and $\widetilde{\Delta}=\Delta\cup\{\alpha_0\}$. 
For a subset $S\subset \widetilde{\Delta}$ we define $d_S:={\rm gcd}(d_i~|~\alpha_i\not\in S)$. In particular, $d_S=1$ if $S\subset \Delta$. 

The group acts on itself by conjugation and we denote $g\cdot h=ghg^{-1}$ for $g,\,h\in G$ and $G\cdot g$ the $G$-conjugacy class of $g\in G$. For a closed subgroup $H\leq G$,   the identity component will be denoted by $H^\circ$, and for $g\in G$ the centralizer will be denoted by $G_g$. We will call $G_g^\circ$ the connected centralizer of $g$. The Jordan decomposition of an element $g\in G$ will be usually denoted by $g=su$.  
 
For $m\in{\mathbb N}$ we set $G_{(m)}:=\{g\in G~|~\dim (G\cdot g)=m\}$.  These sets are locally-closed and their irreducible components are called the {\em sheets} of the $G$-action. 
For  $Z\subset G$ we also define $m_Z:=\max\{m\in{\mathbb N}~|~G_{(m)}\cap Z\neq\emptyset\}$ and  $Z^{reg}:=Z\cap G_{(m_Z)}$, and  $C_G(Z)$ will indicate the centralizer of $Z$ in $G$. 

If we  fix a maximal  torus $T$ of $G$ and $\Phi$ is the root system of $G$ with respect to $T$, then for $\alpha\in \Phi$ we indicate by $X_\alpha$ the corresponding root subgroup. For a closed subset $\Psi\subset \Phi$, (see \cite[VI, n. 1.7, D\'efinition 4]{bourbaki}),  and for $s\in T$ we set 
$G_\Psi:=\langle T,\,X_\alpha~|~\alpha\in\Psi\rangle$.



\subsection{Construction of sheets and a first parametrization}It was observed in  \cite[\S3]{lusztig-inventiones} that  $G$ has a partition into finitely many, locally closed, smooth, irreducible, $G$-stable sets, which we call Jordan classes, each contained in some $G_{(m)}$. As a set, the  class containing $g=su$ is \begin{align*}J(su)=G\cdot((Z(G^{\circ}_s)^\circ s)^{reg}u).\end{align*} In other words, a $G$-conjugacy class  lies in $J(su)$ if and only if it contains an element with Jordan decomposition $s'u$ with $G^{\circ}_{s}=G^{\circ}_{s'}$ and $s'\in Z(G^{\circ}_s)^\circ s$.  The closure  of a Jordan class is a  union of Jordan classes, \cite[\S3]{lusztig-inventiones}, hence the same holds for the regular locus of the closure of a Jordan class. This gives a partial order on the set of Jordan classes given by $J_1\leq J_2$ if and only if $J_1\subset\overline{J_2}^{reg}$. The sheets in $G$ are the locally closed sets of the form $\overline{J}^{reg}$ where $J$ is maximal with respect to $\leq$. Hence the set of sheets in $G$ is in bijection  with  the set  $\mathcal{J}$ consisting of maximal Jordan classes \cite[Proposition 5.1]{gio-espo}, \cite[\S 3]{simion}.

If $J=J(su)$ as above, then \cite[Proposition 4.8]{gio-espo},\cite[Lemma 2.1]{gio-arc}  give:
\begin{align*}\label{eq:reg-closure-group}
\overline{J(su)}^{reg}&=\bigcup_{z\in Z(G_s^\circ)^\circ s}G\cdot(z{\rm Ind}_{G_s^\circ}^{G_z^\circ}(G_s^\circ\cdot u))
\end{align*}
where ${\rm Ind}_{G_s^\circ}^{G_z^\circ}(G_s^\circ\cdot u)$ is Lusztig-Spaltenstein's induced unipotent conjugacy class, \cite{lusp}. The maximal Jordan classes are precisely those for which the class of $u$ is {\em rigid} in $G_s^\circ$, i. e., it is not induced from any unipotent class in a proper Levi subgroup of a parabolic subgroup of $G_s^\circ$, \cite[Proposition 5.3]{gio-espo}, \cite[Lemma 2.4]{simion}. 

Sheets are then parametrized as follows.

\begin{theorem}{\rm (\cite[Theorem 5.6]{gio-espo}, \cite[Theorem 3.1]{simion})}\label{thm:parametrization} The assignment  $J=J(su)\mapsto (G_s^\circ, Z(G_s^\circ)^\circ s, G_s^\circ\cdot u)$ induces a bijection between ${\mathcal J}$ and the set of $G$-orbits of triples $(M,Z(M)^\circ r,\Oc)$ where $M$ is the connected centralizer of a semisimple element of $G$; $Z(M)^\circ r$ is a coset in $Z(M)/Z(M)^\circ$ satisfying $C_G(Z(M)^\circ r)^\circ=M$  and $\Oc$ is a rigid unipotent conjugacy class in $M$.\end{theorem}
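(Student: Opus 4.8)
The plan is to establish that the assignment $J=J(su)\mapsto (G_s^\circ, Z(G_s^\circ)^\circ s, G_s^\circ\cdot u)$ is well defined on $G$-orbits and furnishes the asserted bijection. First I would verify well-definedness: since a Jordan class $J$ determines $su$ only up to $G$-conjugacy and up to the freedom in choosing the semisimple part within $(Z(G_s^\circ)^\circ s)^{reg}$, I must check that replacing $su$ by a conjugate, or by $s'u$ with $G_{s'}^\circ=G_s^\circ$ and $s'\in Z(G_s^\circ)^\circ s$, produces a triple in the same $G$-orbit. The first is immediate from $G_{g\cdot s}^\circ=g\cdot G_s^\circ$ and the equivariance of all the data; the second follows because $G_{s'}^\circ=G_s^\circ$ forces $Z(G_{s'}^\circ)^\circ s'=Z(G_s^\circ)^\circ s$ and leaves $u$ untouched, so the triple is literally unchanged. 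I must also check that the target data genuinely satisfy the stated constraints: $M=G_s^\circ$ is by definition the connected centralizer of a semisimple element; the coset condition $C_G(Z(M)^\circ r)^\circ=M$ holds because, by the very description of $J(su)$, the relation $G_{s'}^\circ=G_s^\circ$ for $s'$ in the regular locus of $Z(G_s^\circ)^\circ s$ is equivalent to $C_G(Z(M)^\circ s)^\circ=M$; and $G_s^\circ\cdot u$ is rigid in $M$ precisely because $J$ is maximal, by the characterization of maximal Jordan classes recalled before the statement.

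Next I would construct the inverse map to prove bijectivity. Given a triple $(M,Z(M)^\circ r,\Oc)$ with the stated properties, choose any $s\in Z(M)^\circ r$ lying in the regular locus, i.e.\ with $G_s^\circ=M$ (such $s$ exists and forms a dense open subset of the coset precisely because $C_G(Z(M)^\circ r)^\circ=M$), and pick a unipotent $u\in M$ with $M\cdot u=\Oc$. I would then set the preimage to be the Jordan class $J(su)$. The maximality of $J(su)$ follows from the rigidity of $\Oc$ via the characterization recalled in the excerpt, so $J(su)\in\mathcal{J}$. I would verify that this assignment is independent of the choices of $s$ and $u$: different admissible choices of $s$ in the regular locus of the same coset give the same Jordan class by the defining description of $J(su)$, while conjugate choices correspond to the $G$-orbit we started from; the choice of representative $u$ within $\Oc$ is absorbed by the $G_s^\circ$-conjugacy built into $J(su)$.

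Finally I would check that the two constructions are mutually inverse. Starting from $J(su)$, forming the triple, and then running the inverse construction returns a Jordan class $J(s'u')$ with $G_{s'}^\circ=G_s^\circ$, $s'\in Z(G_s^\circ)^\circ s$ regular, and $u'$ conjugate to $u$ in $G_s^\circ$; by the defining property of Jordan classes this equals $J(su)$. Conversely, starting from a triple, building $J(su)$, and reading off the associated triple returns $(G_s^\circ,Z(G_s^\circ)^\circ s,G_s^\circ\cdot u)=(M,Z(M)^\circ r,\Oc)$ on the nose. The main obstacle I anticipate is the careful handling of the coset condition $C_G(Z(M)^\circ r)^\circ=M$: I must confirm that it is exactly the condition guaranteeing a dense regular locus in $Z(M)^\circ r$ on which $G_s^\circ=M$, so that the inverse construction is well posed and lands back in $\mathcal{J}$; everything else is a bookkeeping of $G$-equivariance. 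Since the statement is cited verbatim from \cite[Theorem 5.6]{gio-espo} and \cite[Theorem 3.1]{simion}, the substance of the proof is to assemble these equivalences, and I would lean on those references for the parts that are characteristic-independent.
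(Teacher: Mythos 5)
The paper offers no internal proof of this statement: it is imported verbatim from \cite[Theorem 5.6]{gio-espo} and \cite[Theorem 3.1]{simion} and closed with a box, so there is nothing in the paper itself to compare your argument against. Your outline is essentially the argument of those references and is correct in structure. The only step with genuine mathematical content is the one you correctly flag and then defer, namely that the hypothesis $C_G(Z(M)^\circ r)^\circ=M$ guarantees a dense open set of $t\in Z(M)^\circ r$ with $G_t^\circ=M$, so that the inverse construction is well posed. To close that gap: writing $M=G_\Psi$ for a maximal torus $T\supseteq Z(M)$, every $t$ in the coset satisfies $G_t^\circ\supseteq M$, and $G_t^\circ\supsetneq M$ exactly when $\alpha(t)=1$ for some root $\alpha\notin\Psi$; each such condition cuts out a closed subset of the irreducible coset $Z(M)^\circ r$ which is \emph{proper} precisely because of the hypothesis (otherwise $X_\alpha$ would centralize the whole coset, contradicting $C_G(Z(M)^\circ r)^\circ=M$), and a finite union of proper closed subsets cannot exhaust the coset. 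Two smaller remarks: for the forward map the condition $C_G(Z(G_s^\circ)^\circ s)^\circ=G_s^\circ$ is automatic from the inclusions $G_s^\circ\subseteq C_G(Z(G_s^\circ)^\circ s)\subseteq G_s$, so your appeal to ``the very description of $J(su)$'' is unnecessary there; and when you replace $u$ by a $G_s^\circ$-conjugate $u'=hu h^{-1}$ you should note that $h$ centralizes every $s'\in Z(G_s^\circ)^\circ s$, which is what makes $s'u'$ conjugate to $s'u$ and hence leaves the Jordan class unchanged. With these points made explicit, your two constructions are mutually inverse as described.
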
\hfill$\Box$

We aim at a simpler parametrization for $G$ simple and of adjoint type. 

\subsection{Connected centralizers of semisimple elements}

In this subsection $G$ is quasisimple.  Identity components of centralizers of semisimple elements have been studied in \cite{carter,carter-classical,der,mcninch-sommers}. In the spirit of the latter, we give a combinatorial characterization of the root subsystem of such subgroups when the base field is an arbitrary algebraically closed field. We believe it to be well-known but we could not locate a proper reference. 

\begin{proposition}\label{prop:centralizer}
Let $G$ be quasisimple,  $T$ be a maximal torus in $G$ and $\Psi$ be a closed subset of $\Phi$. Then $G_\Psi$ is the connected centralizer of an element in $T$ if and only if  
$\Psi$ is conjugate to a root subsystem $\Psi'$  admitting a base $\Delta_{\Psi'}\subset\widetilde{\Delta}$ and such that ${\rm gcd}(p,\,d_i~|~i\in\widetilde{\Delta}\setminus \Delta_{\Psi'})=1$ by an element in the normalizer $N(T)$ of $T$.
\end{proposition}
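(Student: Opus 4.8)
The plan is to reduce the statement to a purely combinatorial condition on the lattice $\mathbb{Z}\Phi/\mathbb{Z}\Psi$, exploiting that the connected centralizer of a semisimple $s\in T$ is exactly $G_{\Phi_s}$, where $\Phi_s:=\{\alpha\in\Phi:\alpha(s)=1\}$. Thus $G_\Psi$ is the connected centralizer of an element of $T$ if and only if $\Psi=\Phi_s$ for some $s\in T$. Writing $s$ as the homomorphism $\theta_s\colon X^*(T)\to k^\times$, $\chi\mapsto\chi(s)$, and recalling that $k^\times$ is a divisible abelian group (so any homomorphism defined on the subgroup $\mathbb{Z}\Phi\subseteq X^*(T)$ extends to $X^*(T)$), I would first record the reformulation: $\Psi=\Phi_s$ for some $s$ iff there is a homomorphism $\theta\colon\mathbb{Z}\Phi\to k^\times$ with $\Phi\cap\ker\theta=\Psi$. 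In particular this makes the statement depend only on $\Phi$, so I may freely pass to the adjoint group.

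First I would treat the reduction to a subsystem with base in $\widetilde{\Delta}$. Because $\Phi_s$ is unchanged if we replace $s$ by any point of $Z:=\{t\in T:\alpha(t)=1\ \forall\,\alpha\in\Psi\}$ avoiding the remaining hypersurfaces $\ker\alpha$ ($\alpha\in\Phi\setminus\Psi$), and since the prime-to-$p$ torsion points are Zariski dense in the diagonalizable group $Z$, I may assume $s$ has finite order prime to $p$. The Borel--de Siebenthal / affine-Weyl-group description of isotropy subsystems (as in \cite{carter,carter-classical,der}, in the spirit of \cite{mcninch-sommers}) then shows that, after conjugating $s$ by an element of $N(T)$, the subsystem $\Phi_s$ admits a base contained in $\widetilde{\Delta}$: concretely $s$ is moved into the closure of a fundamental alcove and $\Phi_s$ is generated by the walls it meets, which are indexed by a subset of $\widetilde{\Delta}$. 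This produces the conjugate $\Psi'$ with base $\Delta_{\Psi'}=\widetilde{\Delta}\setminus\{\alpha_j:j\in J\}$.

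Next comes the characteristic-dependent core: such a $\Psi'$ is realizable iff $\gcd(p,d_j:j\in J)=1$. The key computation is that $\mathbb{Z}\Phi/\mathbb{Z}\Psi'$ has torsion subgroup cyclic of order $d_J:=\gcd(d_j:j\in J)$; indeed, using $\sum_{i=0}^n d_i\alpha_i=0$ and $d_0=1$, killing every $\alpha_i$ with $i\notin J$ leaves generators $\bar\alpha_j$ ($j\in J$), subject, when $0\notin J$, to the single relation $\sum_{j\in J}d_j\bar\alpha_j=0$ coming from $\alpha_0=-\sum_{i\ge 1}d_i\alpha_i\in\mathbb{Z}\Psi'$, whose Smith normal form yields torsion $\mathbb{Z}/d_J$ (the case $0\in J$ giving $d_J=1$ at once). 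For the ``if'' direction I then build $\theta\colon\mathbb{Z}\Phi/\mathbb{Z}\Psi'\to k^\times$ by sending the free part to generic elements and embedding the torsion $\mathbb{Z}/d_J\hookrightarrow k^\times$, which is possible precisely because $p\nmid d_J$ forces a primitive $d_J$-th root of unity into $k$; a genericity argument, valid since $\Phi$ is finite and $k$ infinite, keeps $\theta$ nonzero on the finitely many images of roots outside $\Psi'$, so $\Phi\cap\ker\theta=\Psi'$ and we recover an $s$.

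For the converse I would argue that if $\Phi_s=\Psi'$ then $\theta_s$ is nonvanishing on the image of every $\beta\in\Phi\setminus\Psi'$; since $\theta_s$ necessarily kills the $p$-primary part of the torsion of $\mathbb{Z}\Phi/\mathbb{Z}\Psi'$ (as $k^\times$ has no $p$-torsion), it suffices to exhibit a root $\beta\notin\Psi'$ whose image is a nonzero element of that $p$-part, for then $\theta_s(\beta)=1$ would contradict $\Phi\cap\ker\theta_s=\Psi'$. When $p\mid d_J$ such a witness exists: for single-node removal one uses that the coefficient of $\alpha_j$ over the positive roots runs through the whole interval $\{0,1,\dots,d_j\}$, so some root realizes the residue of order a power of $p$. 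I expect the precise matching between this lattice/torsion obstruction and the arithmetic of the marks $d_j$ --- including the bookkeeping when $\alpha_0$ is removed and when $\Psi'$ is reducible --- to be the main obstacle, and the point at which the explicit descriptions of centralizer root systems from \cite{carter,carter-classical,der} are invoked to settle the cases the uniform argument does not cover cleanly.
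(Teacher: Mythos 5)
Your overall route coincides with the paper's: reformulate realizability as the existence of $\theta\colon\mathbb{Z}\Phi\to k^\times$ with $\ker\theta\cap\Phi=\Psi'$, reduce to a base inside $\widetilde{\Delta}$ by the alcove argument (this is \cite[Proposition 30]{mcninch-sommers}, which the paper cites), identify the torsion of $\mathbb{Z}\Phi/\mathbb{Z}\Psi'$ as $\mathbb{Z}/d_J\mathbb{Z}$ with $d_J=\gcd(d_j\,:\,j\in J)$, and play the absence of $p$-torsion in $k^\times$ against this group. Your ``if'' direction is correct and somewhat more self-contained than the paper's appeal to \cite[Proposition 32]{mcninch-sommers}; the only thing you should add is a word on why no root of $\Phi\setminus\Psi'$ can lie in $\mathbb{Z}\Psi'$ (your genericity argument tacitly needs this), which holds because a subsystem with base in $\widetilde{\Delta}$ satisfies $\mathbb{Z}\Psi'\cap\Phi=\Psi'$.

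The genuine gap is in the converse, exactly where you flag it. Your witness root --- a root whose class in the torsion subgroup is a nonzero element of $p$-power order --- is produced only for single-node removals, via the coefficient-interval argument; for $|J|\geq 2$ you propose to fall back on the tables of \cite{carter,der}. But multi-node removals with $p\mid d_J$ genuinely occur (e.g.\ $D_4+2A_1\subset E_7$, obtained by deleting $\alpha_1$ and $\alpha_6$, both of mark $2$, which must be excluded for $p=2$), and for these the interval argument says nothing, so as written the converse is not proved. The paper closes this uniformly rather than by case lists, and the fix is short: whenever $p\mid d_J$ the torsion subgroup is nontrivial, so by \cite[VI, \S 1, n.~1.7, Prop.~24]{bourbaki} $\Psi'$ is not ${\mathbb Q}$-closed and some root $\alpha$ lies in $({\mathbb Q}\Psi'\cap\Phi)\setminus\Psi'$, i.e.\ has nonzero torsion class; moreover $d_J$ is $1$ or a prime power except in the single case $\Phi=E_8$, $J=\{\alpha_4\}$, $d_J=6$ (no two marks are both divisible by $6$, so $|J|\geq 2$ forces $d_J\in\{1,2,3,4,5\}$). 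Hence in every multi-node case with $p\mid d_J$ the whole torsion group is $p$-primary and this $\alpha$ is automatically your witness, while the one exceptional value $d_J=6$ arises only from a single-node removal, where your argument (a root with $\alpha_4$-coefficient $6/p$) already applies --- the paper does exactly that there. Supplying this dichotomy would complete your proof with no recourse to explicit classifications.
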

\begin{proof}If $\Psi$ is $N(T)$-conjugate to a root subsystem $\Psi'$  admitting a base $\Delta_{\Psi'}\subset\widetilde{\Delta}$ and such that ${\rm gcd}(p,\,d_i~|~i\in\widetilde{\Delta}\setminus \Delta_{\psi'})=1$, then there exists an $\alpha_{\overline{i}} \in\widetilde{\Delta}\setminus \Delta_{\Psi'}$ such that $p\not|d_{\overline{i}}$. Replacing $a_1$ with $d_{\overline{i}}$ in the proof of \cite[Proposition 32]{mcninch-sommers}, we obtain an element $s\in Z(G_\Psi)$ such that $G_s^\circ=G_\Psi$.

Assume now that $G_\Psi=G_s^\circ$ for some $s\in T$. By \cite[Proposition 30]{mcninch-sommers} the subgroup $G_{\Psi}$ is $G$-conjugate to some $G_{\Psi'}$ where $\Psi'$ admits a base $\Delta_{\Psi'}\subset\widetilde{\Delta}$. Conjugacy of maximal tori in $G_{\Psi'}$ ensures that conjugation of the two subgroups, and of the corresponding root systems can be obtained using an element in $N(T)$. We show that ${\rm gcd}(p, d_{\Delta_{\Psi'}})=1$. If the characteristic exponent $p=1$, then there is nothing to prove. Assume  for a contradiction that $p>1$ and divides  $d_i$ for every $\alpha_i\in \widetilde{\Delta}\setminus \Delta_{\Psi'}$. Since $d_0=1$, in this situation $\alpha_0\in\Delta_{\Psi'}$ and $G_{\Psi'}$ is never the Levi subgroup of a parabolic subgroup of $G$.\\
Let $\{\omega_j^\vee,\,j=1,\,\ldots,\,n\}$ be the basis for the cocharacters of $T$ dual to $\Delta$ and let $t\in T$ be such that $G_{\Psi'}=G_t^\circ$.  
Since $\alpha_i(t)=1$ for every $\alpha_i\in\Delta_{\Psi'}$, we have $t=\prod_{j=1,\ldots,\,n\atop 
\alpha_j\not\in\Delta_{\Psi}}\omega_j^\vee(\zeta_j)$, and $\alpha_0(t)=1$ gives 
\begin{equation}\label{eq:equality}
\prod_{j\geq 1,\, 
\alpha_j\not\in\Delta_{\Psi'}}\zeta_j^{d_j}=1,  \textrm{ so also } \prod_{j\geq 1,\,
\alpha_j\not\in\Delta_{\Psi'}}\zeta_j^{d_j/p^l}=1\textrm{ if }p^l|d_{\Delta_{\Psi'}}.
\end{equation}
By \cite[VI \S 1, n. 1.7, Prop 24]{bourbaki} the system $\Psi'$ is not ${\mathbb Q}$-closed, i.e., there exists an $\alpha\in\left({\mathbb Q}\Delta_{\Psi'}\cap \Phi\right)\setminus \Psi'$. 
In other words, there exist $a,\,b,\, a_i,\,b_i\in{\mathbb Z}$, for $i=1,\,\ldots,\,n$ such that $\alpha_i\not\in\Delta_{\Psi'}$, with $b, b_i\neq0$ and $z_j\in {\mathbb Z}$ for $j=1,\,\ldots,\,n$ such that  
\begin{align*}
\alpha=\frac{a}{b}\alpha_0+\sum_{\alpha_i\in\Delta_{\Psi'}\cap\Delta}\frac{a_i}{b_i}\alpha_i=\sum_{j=1}^nz_j\alpha_j\in\Phi\setminus\Psi'.
\end{align*}
Hence $b|d_i$ for all $\alpha_i\not\in \Delta_{\Psi'}$, that is, $b|d_{\Delta_{\Psi'}}$. 
Observe that $d_{\Delta_{\Psi'}}$ is either $p^l$ for some $l\geq 1$ or else $d_{\Delta_{\Psi'}}=6$, $\Phi=E_8$, $p=2$ or $3$, and $\Delta_{\Psi'}=\widetilde{\Delta}\setminus\{\alpha_4\}$.
In the first case \eqref{eq:equality} gives $\alpha(t)=1$, a contradiction. In the second case, $t=\omega^\vee_4(\zeta_4)$ and \eqref{eq:equality} becomes $\zeta_4^{6/p}=1$. Let $\beta\in\Phi\setminus\Psi'$ be such that its coefficient of $\alpha_4$ in its expression as a sum of simple roots is $6/p$. Then  again $\beta(t)=1$, a contradiction. \\
\end{proof}

\begin{proposition}\label{prop:mc-sommers}Let $G$ be simple of adjoint type,  $T$ be a maximal torus in $G$, and $\Psi\subset \Phi$ be a closed subsystem such that 
 $G_{\Psi}=G_s^\circ$ for some $s\in T$. Assume in addition that $\Psi$ admits a base $\Delta_{\Psi}\subset\widetilde{\Delta}$. Then
\begin{enumerate}
\item[{\rm (a)}] The torsion subgroup of ${\mathbb Z}\Phi/{\mathbb Z}\Psi$ is ${\mathbb Z}/d_{\Delta_\Psi}{\mathbb Z}$.
\item[{\rm (b)}] $Z(G_\Psi)/Z(G_\Psi)^\circ$ is cyclic of order $d_{\Delta_{\Psi}}$. 
\item[{\rm (c)}] For $t\in Z(G_\Psi)$ we have $C_G(Z(G_\Psi)^\circ t)=G_\Psi$ if and only if  $Z(G_\Psi)^\circ t$ is a generator of $Z(G_\Psi)/Z(G_\Psi)^\circ$.
\end{enumerate}
\end{proposition}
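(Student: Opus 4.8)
Write $Z=Z(G_\Psi)$, $Z^\circ=Z(G_\Psi)^\circ$ and $A:={\mathbb Z}\Phi/{\mathbb Z}\Psi$. Since $G$ is of adjoint type one has $X^*(T)={\mathbb Z}\Phi$, and since $G_\Psi$ is connected reductive with maximal torus $T$ and root system $\Psi$ we have $Z=\bigcap_{\alpha\in\Psi}\ker\alpha$; hence $Z$ is the diagonalizable group with $X^*(Z)=A$, and all three parts will be read off from $A$. For (a) I use that ${\mathbb Z}\Phi$ is free on $\alpha_1,\dots,\alpha_n$ and that the only linear relation among the elements of $\widetilde{\Delta}$ is the primitive one $\sum_{i=0}^{n}d_i\alpha_i=0$ (with $d_0=1$). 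Presenting ${\mathbb Z}\Phi$ by the generators $\{\alpha_i\}_{i=0}^{n}$ and this relation, then killing the generators lying in $\Delta_\Psi$ (which is what passing to $A$ does, as ${\mathbb Z}\Psi=\langle\Delta_\Psi\rangle$), I obtain $A\cong{\mathbb Z}^{S^c}/\langle(d_i)_{i\in S^c}\rangle$, where $S^c=\{i:\alpha_i\in\widetilde{\Delta}\setminus\Delta_\Psi\}$. A Smith normal form computation for the single relation vector $(d_i)_{i\in S^c}$ yields $A\cong{\mathbb Z}^{|S^c|-1}\oplus{\mathbb Z}/g{\mathbb Z}$ with $g={\rm gcd}(d_i:i\in S^c)=d_{\Delta_\Psi}$ (and $S^c\neq\emptyset$ as $\Delta_\Psi$ is linearly independent), so the torsion subgroup is ${\mathbb Z}/d_{\Delta_\Psi}{\mathbb Z}$, proving (a). For (b) I apply the antiequivalence between diagonalizable $k$-groups and finitely generated abelian groups: for $D$ with $X^*(D)=A$ the torus $D^\circ$ has $X^*(D^\circ)=A/A_{\rm tors}$, while $D/D^\circ$ is Cartier dual to the prime-to-$p$ part of $A_{\rm tors}$, hence cyclic of the same order when that part is cyclic. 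With $A_{\rm tors}\cong{\mathbb Z}/d_{\Delta_\Psi}{\mathbb Z}$ it remains to see $p\nmid d_{\Delta_\Psi}$; but this is exactly Proposition \ref{prop:centralizer}, as $G_\Psi=G_s^\circ$ with base $\Delta_\Psi\subset\widetilde{\Delta}$ forces ${\rm gcd}(p,d_i:\alpha_i\in\widetilde{\Delta}\setminus\Delta_\Psi)={\rm gcd}(p,d_{\Delta_\Psi})=1$. Thus $Z/Z^\circ$ is cyclic of order $d_{\Delta_\Psi}$.

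For (c) I first reduce to root systems. Since $G_\Psi$ is connected and the condition featuring in Theorem \ref{thm:parametrization} is $C_G(Z^\circ t)^\circ=G_\Psi$, it is this equality that I establish. Centralizing the coset $Z^\circ t$ is the same as centralizing $\langle Z^\circ,t\rangle$, so $C_G(Z^\circ t)^\circ=G_{\Psi_t}$ with $\Psi_t=\{\alpha\in\Phi:\alpha|_{Z^\circ}=1,\ \alpha(t)=1\}$. A root has $\alpha|_{Z^\circ}=1$ iff its class $\bar\alpha\in A$ is torsion, i.e. iff $\alpha\in\Psi^+:=\Phi\cap{\mathbb Q}\Psi$, the root system of the Levi $L:=C_G(Z^\circ)$ (connected, being the centralizer of a torus); thus $\Psi\subseteq\Psi_t\subseteq\Psi^+$. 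Moreover $s\in Z$ (it lies in $T$ and centralizes $G_\Psi$), so $G_\Psi\subseteq C_G(Z)\subseteq C_G(s)$ and, on identity components, $C_G(Z)^\circ=G_s^\circ=G_\Psi$; in particular $\{\alpha:\alpha|_Z=1\}=\Psi$, i.e. $\Psi=\{\alpha\in\Psi^+:\bar\alpha=0\}$.

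Now if $Z^\circ t$ generates $Z/Z^\circ$ then $\langle Z^\circ,t\rangle=Z$ and $C_G(Z^\circ t)^\circ=C_G(Z)^\circ=G_\Psi$ by the previous paragraph. For the converse I use (b) to identify $Z/Z^\circ\cong{\mathbb Z}/d$ ($d:=d_{\Delta_\Psi}$) with the dual of $A_{\rm tors}$, under which, for $\alpha\in\Psi^+$, the value $\alpha(t)$ depends only on the class $\bar t$ of $t$ and equals the perfect pairing $\langle\bar\alpha,\bar t\rangle\in\mu_d$. If $Z^\circ t$ is not a generator then $\bar t$ lies in a proper subgroup, its annihilator $I:=\{a\in A_{\rm tors}:\langle a,\bar t\rangle=1\}$ is a nonzero cyclic subgroup of $A_{\rm tors}\cong{\mathbb Z}/d$, and $\Psi_t=\{\alpha\in\Psi^+:\bar\alpha\in I\}$. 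Hence $\Psi_t\supsetneq\Psi$, so $C_G(Z^\circ t)^\circ\supsetneq G_\Psi$, as soon as some root of $\Psi^+$ has nonzero class in $I$. The converse therefore follows from the following \emph{root-detection} statement: for every prime $\ell\mid d_{\Delta_\Psi}$ there is a root $\alpha\in\Psi^+=\Phi\cap{\mathbb Q}\Psi$ whose class $\bar\alpha$ has order $\ell$ in $A_{\rm tors}$; indeed a nonzero cyclic $I$ contains, for some prime $\ell\mid|I|$, the unique order-$\ell$ subgroup, which then houses such an $\bar\alpha\in I\setminus\{0\}$.

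I expect root-detection to be the main obstacle. It cannot hold by pure lattice generalities: for an abstract full-rank inclusion $\Psi\subseteq\Psi^+$ with cyclic quotient the root-images could a priori be only $\{0,\pm1\}\subset{\mathbb Z}/6$, missing the order-$2$ and order-$3$ classes, so the proof must exploit that $\Psi$ arises from a subset $\Delta_\Psi\subset\widetilde{\Delta}$. The plan is to run the Borel–de Siebenthal mechanism already used in Proposition \ref{prop:centralizer}: whenever $\alpha_0\in\Delta_\Psi$ and $d_{\Delta_\Psi}>1$, \cite[VI \S1, n. 1.7, Prop.\ 24]{bourbaki} produces roots in $({\mathbb Q}\Delta_\Psi\cap\Phi)\setminus\Psi=\Psi^+\setminus\Psi$ with controlled class in $A_{\rm tors}$; iterating this along a chain of subsystems realizing the prime factorization of $d_{\Delta_\Psi}$ should furnish, for each prime $\ell\mid d_{\Delta_\Psi}$, a root of order $\ell$. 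As $d_{\Delta_\Psi}>1$ occurs only for a short list of $\Delta_\Psi\subset\widetilde{\Delta}$, the combinatorial description of the $\Psi^+$ obtained earlier in this section (in the spirit of \cite{mcninch-sommers}) reduces any remaining verification to finitely many cases.
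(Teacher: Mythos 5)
Your parts (a) and (b) are correct and in fact more self-contained than the paper's treatment: the Smith-normal-form computation of ${\mathbb Z}\Phi/{\mathbb Z}\Psi$ replaces the citation of \cite[\S 2]{sommers}, and your use of the antiequivalence for diagonalizable groups, together with the coprimality ${\rm gcd}(p,d_{\Delta_\Psi})=1$ supplied by Proposition \ref{prop:centralizer}, is exactly the mechanism the paper invokes via \cite[Lemma 33]{mcninch-sommers}. Your reduction of (c) to the perfect pairing between $A_{\rm tors}$ and $Z/Z^\circ$ and to the description $\Psi_t=\{\alpha\in\Phi\cap{\mathbb Q}\Psi:\bar\alpha\in I\}$ is also sound (note, though, that you prove the statement with $C_G(Z(G_\Psi)^\circ t)^\circ$ where the proposition asserts it for the full centralizer; the connected version is what Theorem \ref{thm:parametrization} uses, but it is not literally what is claimed).

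The genuine gap is the \emph{root-detection} claim, which you correctly identify as the crux and then leave as a plan rather than a proof. Everything in (c) hinges on it: without producing, for each prime $\ell\mid d_{\Delta_\Psi}$, a root of ${\mathbb Q}\Psi\cap\Phi$ whose class in $A_{\rm tors}$ has order $\ell$, the implication ``$Z^\circ t$ not a generator $\Rightarrow$ $C_G(Z^\circ t)^\circ\supsetneq G_\Psi$'' is unproved, and your proposed route (iterated Borel--de Siebenthal plus a finite case check over the $\Delta_\Psi$ with $d_{\Delta_\Psi}>1$) is only sketched, not executed. This is precisely the point the paper addresses differently: it observes that a stronger, purely root-theoretic and characteristic-free fact is already available, namely that every character of $Z(G_\Psi)/Z(G_\Psi)^\circ$ is represented by a root, via the natural isomorphism of $X(Z(G_\Psi)/Z(G_\Psi)^\circ)$ with ${\mathbb Z}({\mathbb Q}\Psi\cap\Phi)/{\mathbb Z}\Psi$; this is the content of the proof of \cite[Lemma 34(1)]{mcninch-sommers} (going back to \cite{sommers}), and the only thing to check in bad characteristic is that $X(Z(G_\Psi)/Z(G_\Psi)^\circ)$ is still the whole torsion subgroup of ${\mathbb Z}\Phi/{\mathbb Z}\Psi$, which is your part (b). So the cleanest repair is not to re-derive root-detection case by case but to quote that characteristic-free argument; as written, your proof of (c) is incomplete.
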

\begin{proof}
(a) This is observed in \cite[\S 2]{sommers}. \\ (b)  The order of the torsion subgroup of ${\mathbb Z}\Phi/{\mathbb Z}\Psi$ is coprime with $p$ by Proposition \ref{prop:centralizer}. Hence we are in a position to use the argument in  \cite[\S 2.1]{sommers}, \cite[Lemma 33]{mcninch-sommers}, that we sketch for completeness. By construction  and \cite[Proposition 3.8]{MT}  from which we borrow notation, $Z(G_\Psi)=({\mathbb Z}\Psi)^\perp$, $Z(G_\Psi)^\perp={\mathbb Z}\Psi$, and the character group $X(Z(G_\Psi))$ is ${\mathbb Z}\Phi/{\mathbb Z}\Psi$. Then  $$X(Z(G_\Psi)/Z(G_\Psi)^\circ)\simeq\{\chi\in X(Z(G_\Psi))~|~\chi(z)=1,\,\forall z\in Z(G_\Psi)^\circ\}$$ consists of those torsion elements in ${\mathbb Z}\Phi/{\mathbb Z}\Psi$ whose order is coprime with $p$, that is,  ${\mathbb Z}/d_{\Delta_\Psi}{\mathbb Z}$. \\
(c) In good characteristic this is \cite[Lemma 34 (1)]{mcninch-sommers}, whose proof relies on the fact that every character of $Z(G_\Psi)/Z(G_\Psi)^\circ$ can be represented by an element in $\Phi$. The proof of this statement depends on considerations on root systems which are characteristic-free and on a natural isomorphism between $X(Z(G_\Psi)/Z(G_\Psi)^\circ)$ and ${\mathbb Z}({\mathbb Q}\Psi\cap\Phi)/{\mathbb Z}\Psi$, which remains valid  because $X(Z(G_\Psi)/Z(G_\Psi)^\circ)$ is the torsion subgroup of ${\mathbb Z}\Phi/{\mathbb Z}\Psi$ also in our situation.\end{proof} 

\subsection{The parametrization}

In this Section $G$ is simple of adjoint type. We are now in a position to prove the refinement of the parametrization of sheets of $G$. The general case can be readily deduced by standard arguments.

\begin{theorem}\label{thm:refinement}
Let $G$ be simple and of adjoint type. The sheets in $G$ are in bijection with the $G$-conjugacy classes of pairs $(M,\Oc)$ where $M$ is the connected centralizer of a semisimple element in $G$ and $\Oc$ is a rigid unipotent conjugacy class in $M$. 
\end{theorem}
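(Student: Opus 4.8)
The plan is to deduce this refinement from the triple parametrization of Theorem~\ref{thm:parametrization} by forgetting the coset datum and showing that the resulting forgetful map descends to a bijection on $G$-orbits. Concretely, I would consider the assignment sending the $G$-orbit of an admissible triple $(M, Z(M)^\circ r, \Oc)$ to the $G$-orbit of the pair $(M,\Oc)$. Surjectivity is immediate: given any $M$ that is a connected centralizer of a semisimple element and any rigid class $\Oc$ in $M$, Proposition~\ref{prop:mc-sommers}(b)--(c) guarantees that $Z(M)/Z(M)^\circ$ is cyclic of order $d_{\Delta_\Psi}$ and that its generators are precisely the cosets $Z(M)^\circ r$ with $C_G(Z(M)^\circ r)=M$, so at least one admissible triple lies above $(M,\Oc)$. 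Everything thus reduces to injectivity.

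For injectivity I would first observe that if two triple-orbits have the same image $(M,\Oc)$ up to $G$-conjugacy, then after conjugating one may assume both triples share the same $M$ and the same $\Oc$, differing only in the admissible cosets $Z(M)^\circ r$, $Z(M)^\circ r'$. Since conjugation by $M$ (and, more generally, inner automorphisms of $M$) acts trivially on $Z(M)/Z(M)^\circ$ and preserves $\Oc$, the conjugating element may be taken in $N_G(M)$, and the statement becomes: the stabilizer $N_G(M)_{\Oc}$ of $\Oc$ in $N_G(M)$ acts transitively on the set of generators of $Z(M)/Z(M)^\circ$. As the admissible cosets are exactly these generators and $\mathrm{Aut}(Z(M)/Z(M)^\circ)\cong(\mathbb{Z}/d_{\Delta_\Psi}\mathbb{Z})^\times$ acts on them by multiplication, transitivity is equivalent to $N_G(M)_{\Oc}$ surjecting onto $(\mathbb{Z}/d_{\Delta_\Psi}\mathbb{Z})^\times$.

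Here I would put $M$ in the normal form $M=G_\Psi$ with $\Delta_\Psi\subset\widetilde{\Delta}$ afforded by Proposition~\ref{prop:centralizer}, and replay the argument of \cite[Theorem 4.1]{gio-arc}. Writing $N_W(\Psi)$ for the stabilizer of $\Psi$ in the Weyl group $W$, conjugacy of maximal tori in $M$ gives $N_G(M)=(N_G(T)\cap N_G(M))\cdot M$, so the action of $N_G(M)$ on $Z(M)/Z(M)^\circ$ factors through $N_W(\Psi)/W(\Psi)$; the surjectivity of this image onto $(\mathbb{Z}/d_{\Delta_\Psi}\mathbb{Z})^\times$ is a property of the ambient root system established in good characteristic and is insensitive to $p$. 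Its only characteristic-dependent inputs are that $d_{\Delta_\Psi}$ is prime to $p$ and that the component group is cyclic of that order with the generator characterization above, now furnished in arbitrary characteristic by Propositions~\ref{prop:centralizer} and~\ref{prop:mc-sommers}, together with the description of $\overline{J(su)}^{reg}$ and of rigidity recalled in the previous subsection, which hold in all characteristics. It then remains to check that the normalizer elements realizing $(\mathbb{Z}/d_{\Delta_\Psi}\mathbb{Z})^\times$ can be chosen to fix $\Oc$.

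The main obstacle is precisely this last verification. Elements of $N_G(M)$ that act nontrivially on $Z(M)/Z(M)^\circ$ necessarily induce outer, diagram-type automorphisms on $M$, which a priori permute its unipotent classes, so one must ensure they stabilize the chosen rigid class $\Oc$. I expect this to be controlled through the explicit list of subsystems $\Psi$ with $d_{\Delta_\Psi}\geq 3$ produced by Proposition~\ref{prop:centralizer}: when $d_{\Delta_\Psi}\leq 2$ there is nothing to prove, and in the remaining cases the factors of $M$ are of restricted type, so that the induced automorphisms fix each rigid class (the trivial class being always rigid and always fixed). The genuinely new feature of bad characteristic is that $M$ may carry more unipotent, hence possibly more rigid, classes than in good characteristic; the delicate point is therefore to confirm that these additional rigid classes are still individually stabilized by the elements realizing $(\mathbb{Z}/d_{\Delta_\Psi}\mathbb{Z})^\times$, so that the good-characteristic transitivity argument carries over. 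Once this is secured, injectivity follows and the forgetful map is the desired bijection.
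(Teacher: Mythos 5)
Your overall strategy coincides with the paper's: forget the coset datum, observe surjectivity, and prove injectivity by exhibiting elements of $N_G(M)$ that stabilize $\Oc$ and act transitively on the generators of $Z(M)/Z(M)^\circ$, with the characteristic-dependent inputs supplied by Propositions~\ref{prop:centralizer} and~\ref{prop:mc-sommers}. Up to and including the reduction to the surjectivity of $N_G(M)_{\Oc}\to(\mathbb{Z}/d_{\Delta_\Psi}\mathbb{Z})^\times$ and the appeal to the good-characteristic Weyl-group element (this is \cite[Proposition 7]{sommers}: a $w\in N_W(\Delta_\Psi)$ whose action generates the automorphism group of the torsion subgroup of ${\mathbb Z}\Phi/{\mathbb Z}\Psi$), your argument is the paper's argument.

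The gap is that you stop exactly where the real work begins: you say you \emph{expect} the elements realizing $(\mathbb{Z}/d_{\Delta_\Psi}\mathbb{Z})^\times$ to fix every rigid class, and you correctly identify the danger (extra rigid classes appearing in bad characteristic), but you do not verify it. The paper closes this point as follows. The classical case is dispensed with first, since there $M$ is a Levi subgroup of a parabolic and $Z(M)$ is connected. For $G$ exceptional and $d_{\Delta_\Psi}\geq 3$, rigid classes in type $A$ are trivial, so one only needs to worry about a component of $\Delta_\Psi$ of type different from $A$ carrying a nontrivial rigid class; $w$ preserves that component, and the list of $\Delta_\Psi$ with $d_{\Delta_\Psi}\geq 3$ in the proof of \cite[Proposition 7]{sommers} leaves only two such cases, both in $E_8$: $\Delta_\Psi=A_3+D_5$ (possible only for $p=3,5$) and $\Delta_\Psi=A_2+E_6$. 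One then checks that the relevant classes are characteristic: in $D_5$ because for $p=3,5$ the only unipotent classes not fixed by all automorphisms correspond to very even partitions, which do not exist in $D_n$ for $n$ odd; in $E_6$ because the rigid unipotent classes in arbitrary characteristic, read off from \cite[Chapitre II. Appendice]{spalt}, are distinguished by their dimensions. Without this finite case check --- which is precisely where the bad-characteristic hypothesis could have caused trouble --- the injectivity claim is not established.
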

\begin{proof}
In good characteristic this is \cite[Theorem 4.1]{gio-MR}, so we assume that $p$ is bad for $G$. Sheets are parametrized by triples $(M, Z(M)^\circ s,\Oc)$ as in Theorem \ref{thm:parametrization}.
The assignment $(M, Z(M)^\circ s,\Oc)\mapsto (M,\,\Oc)$ induces a well-defined and surjective map between the set of $G$-conjugacy classes of  triples and the set of $G$-conjugacy classes of pairs as above. We show injectivity of this map.  If $G$ is classical, then $M$ is a Levi subgroup of a parabolic subgroup of $G$, for any pair $(M,\Oc)$, hence $Z(M)=Z(M)^\circ$ and there is nothing to prove, so we assume that $G$ is of exceptional type.  

Let  $(M,Z(M)^\circ s,\Oc)$ and $(M,Z(M)^\circ r,\Oc)$ be two triples inducing the same image. Without loss of generality $s\in T$,  $M=G_\Psi$ where $\Psi$ has base  $\Delta_\Psi\subset \widetilde{\Delta}$, and  $Z(M)^\circ s,\,Z(M)^\circ r\subset T$.  If $d_{\Delta_{\Psi}}\leq 2$, then necessarily $Z(M)^\circ r= Z(M)^\circ s$, so we assume that  $d_{\Delta_{\Psi}}\geq 3$.
 
By \cite[Proposition 7]{sommers}  there is a $w$ in the stabilizer $N_W(\Delta_\Psi)$ of $\Delta_{\Psi}$ in $W$ whose action on ${\mathbb Z}\Phi/{\mathbb Z}\Psi$ generates the automorphism group of the torsion subgroup of ${\mathbb Z}\Phi/{\mathbb Z}\Psi$, which is isomorphic to $Z(M)/Z(M)^\circ$ by Propoisition \ref{prop:mc-sommers} (a).  We claim that any representative of $w$ in $N(T)$ preserves $\Oc$.  Since  rigid unipotent classes in type $A$ are trivial,  it is enough to consider only the case in which $\Delta_\Psi$ contains a (necessarily unique) component of type different from $A$, and $\Oc$ is non-trivial in the corresponding subgroup. Such a component is always preserved by the action of $w$.  The list of $\Delta_{\Psi}$ with $d_{\Delta_{\Psi}}\geq 3$ in the proof of \cite[Proposition 7]{sommers} shows  that we only need to consider two cases for $G$ of type $E_8$, namely $\Delta_{\Psi}=A_3+ D_5$ which may occur only when $p=3,5$,  and $\Delta_{\Psi}=A_2+ E_6$. Unipotent conjugacy classes in type $D$ for  $p=3$ and $5$ are characteristic unless their Jordan form corresponds to a  very even partition, i.e., a  partition with only even terms, each occurring an even number of times. Such partitions never occur in $D_n$ for $n$ odd.
Rigid unipotent conjugacy classes in $E_6$ in arbitrary characteristic can be deduced from \cite[Chapitre II. Appendice]{spalt} and they are characteristic for dimensional reasons. 
\end{proof}

Fixing a maximal torus $T$ in $G$, by standard arguments we retrieve a parametrization of sheets by orbits of the Weyl group. 
We set ${\mathcal T}$ to be the set of pairs $(M,\Oc)$ where $M$ is the connected centralizer of an element in $T$ and $\Oc$ is a rigid unipotent class in $M$, so $N(T)$ and $W=N(T)/T$ naturally act on $\mathcal{T}$.  

\begin{corollary} 
Let $G$ be simple and of adjoint type. The sheets in $G$ are parametrized by elements in ${\mathcal T}/W$. 
\end{corollary}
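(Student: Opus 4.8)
The plan is to deduce the corollary from Theorem~\ref{thm:refinement} by descending from $G$-conjugacy classes of pairs $(M,\Oc)$ to $W$-orbits in $\mathcal{T}$. First I would observe that Theorem~\ref{thm:refinement} already puts the sheets in bijection with the $G$-conjugacy classes of pairs $(M,\Oc)$, where $M$ ranges over connected centralizers of semisimple elements and $\Oc$ is a rigid unipotent class in $M$. The task is therefore purely to translate this orbit set into $\mathcal{T}/W$, that is, to replace ``$G$-conjugacy classes of arbitrary such pairs'' by ``$W$-orbits of pairs with $M$ the connected centralizer of an element of the \emph{fixed} torus $T$''.

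The key step is a standard reduction to the maximal torus. Since every semisimple element of $G$ is $G$-conjugate to an element of $T$, every connected centralizer $M$ is $G$-conjugate to one of the form $G_s^\circ$ with $s\in T$; hence every $G$-orbit of pairs meets $\mathcal{T}$, giving a surjection $\mathcal{T}\to\{G\text{-orbits of pairs}\}$. It remains to show that two pairs in $\mathcal{T}$ are $G$-conjugate precisely when they are $W$-conjugate. The nontrivial direction is that $G$-conjugacy forces $W$-conjugacy: if $g\cdot(M,\Oc)=(M',\Oc')$ with $M=G_s^\circ$, $M'=G_{s'}^\circ$ and $s,s'\in T$, then both $T$ and $gTg^{-1}$ are maximal tori of $M'$, so after adjusting $g$ on the right by an element of $M'$ we may assume $g$ normalizes $T$, and then $g$ represents an element of $W=N(T)/T$ carrying $(M,\Oc)$ to $(M',\Oc')$. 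The converse is immediate since $N(T)\leq G$ and $N(T)$ acts on $\mathcal{T}$ by conjugation, so $W$-conjugate pairs are a fortiori $G$-conjugate. Combining the two directions identifies the $G$-orbit set with $\mathcal{T}/W$.

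The main obstacle, such as it is, lies in the reduction allowing one to take $g\in N(T)$: one must invoke the conjugacy of maximal tori inside the connected reductive group $M'=G_{s'}^\circ$ and check that the correction element, lying in $M'$, fixes the pair $(M',\Oc')$ (which it does, since it centralizes $s'$ and hence preserves $M'$ and acts on its unipotent classes trivially on the relevant orbit). This is exactly the ``standard argument'' alluded to in the text and already used implicitly in the proof of Proposition~\ref{prop:centralizer}. I would phrase it briefly and cite the conjugacy of maximal tori, then conclude that the assignment $(M,\Oc)\mapsto\{G\text{-orbit of }(M,\Oc)\}$ factors through a bijection $\mathcal{T}/W\to\{\text{sheets of }G\}$, completing the proof.

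\begin{proof}
By Theorem~\ref{thm:refinement} the sheets in $G$ are in bijection with the $G$-conjugacy classes of pairs $(M,\Oc)$ as in the statement. Since every semisimple element of $G$ is conjugate to an element of $T$, every such $M$ is $G$-conjugate to $G_s^\circ$ for some $s\in T$; thus every $G$-orbit of pairs meets $\mathcal{T}$, and the natural map $\mathcal{T}\to\{G\text{-orbits of pairs}\}$ is surjective. As $N(T)\leq G$, pairs in the same $W$-orbit are $G$-conjugate, so this map factors through $\mathcal{T}/W$. Conversely, suppose $g\cdot(G_s^\circ,\Oc)=(G_{s'}^\circ,\Oc')$ with $s,s'\in T$. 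Then $T$ and $gTg^{-1}$ are maximal tori of the connected reductive group $G_{s'}^\circ$, so there is $m\in G_{s'}^\circ$ with $mgTg^{-1}m^{-1}=T$; the element $mg$ normalizes $T$ and still sends the first pair to the second, since $m$ centralizes $s'$ and hence fixes $(G_{s'}^\circ,\Oc')$. Therefore the two pairs lie in the same $N(T)$-orbit, hence in the same $W$-orbit. This shows $\mathcal{T}/W\to\{G\text{-orbits of pairs}\}$ is injective, hence bijective, and composing with the bijection of Theorem~\ref{thm:refinement} parametrizes the sheets of $G$ by $\mathcal{T}/W$.
\end{proof}
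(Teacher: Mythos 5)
Your argument is correct and is precisely the ``standard argument'' the paper invokes without writing out: the paper's own proof of this corollary is essentially empty, deferring to Theorem~\ref{thm:refinement} plus the routine reduction from $G$-conjugacy to $W$-conjugacy via conjugacy of maximal tori inside the connected centralizer. Your write-up supplies exactly those details (including the check that the correcting element $m\in G_{s'}^\circ$ fixes the pair and that the $N(T)$-action factors through $W$), so it matches the intended proof.
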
\hfill$\Box$

%

\subsection{On the number of sheets in $G$}

It was observed in  \cite[Remark 3.3]{simion} that for $G$ of type $B_2$, the number of sheets  is independent of the characteristic and it was suggested this to hold in general for $G$ connected and simply connected.  This fails in general  because there exist sheets that are obtained from one another by multiplication by a central element, and such central element might no longer exist in bad characteristic: for example in  $G=SL_2(k)$, the sheets are: $\{\id\}$, $\{-\id\}$ and $G^{reg}$ for $p\neq 2$ and  $\{\id\}$ and $G^{reg}$ for $p=2$. The following remark shows that the number of sheets depends on $p$ also for $G$ simple of adjoint type. 
\begin{remark}
{\rm Let $\Phi=G_2$.  We use the parametrization in  Theorem \ref{thm:refinement}. The semisimple parts of the connected centralizers for $p$ good are of type:  $G_2$, $A_2$, $A_1+\widetilde{A}_1$, $A_1$, $\widetilde{A}_1$
 or conjugate to $T$. In type $A$ all rigid unipotent classes are trivial, and there are $3$ rigid unipotent classes in $G$, \cite[Chapitre II. Appendice]{spalt}. Hence, there are $8$ sheets for $p$ good.

According to Proposition \ref{prop:lista} the semisimple parts of the connected centralizers for $p=2$ are of type:  $G_2$, $A_2$, $A_1$, $\widetilde{A}_1$
 or conjugate to $T$, and there are $3$ rigid unipotent classes in $G$, \cite[Chapitre II. Appendice]{spalt}. Hence, there are $7$ sheets for $p=2$.
 
According to Proposition \ref{prop:lista} the semisimple parts of the connected centralizers for $p=3$ are of type:  $G_2$, $A_1+\widetilde{A}_1$,  $A_1$, $\widetilde{A}_1$ or conjugate to $T$, and there is an extra unipotent conjugacy class in $G_2$ which is rigid, as it can be deduced from the list of induced classes in \cite[Chapitre II. Appendice]{spalt}. Hence, the number of sheets for $p=3$ equals the number of sheets for $p$ good. }
\end{remark}


\section*{Appendix}

For the reader's convenience and for further reference we list the possible connected centralizers in bad characteristic, obtained making use of the analysis of $W$-conjugacy  classes of subsets of $\widetilde{\Delta}$ in \cite[\S 2.2]{sommers}. In most cases these classes are determined by their isomorphism type and the root lengths.  For $\Phi=E_7$ and $E_8$ we remove ambiguities adopting, as in {\it loc. cit.}, Dynkin's convention. Namely, for $n= 7,8$ we decorate with one prime the root subsystems which can be embedded in the subsystem of type $A_n$ within $E_n$ while we decorate with two primes the root subsystems with the same label which cannot be embedded in $A_n \subset E_n$. 

\begin{proposition}\label{prop:lista}
Let $G$  be quasisimple with $p$ bad for $G$. Let $T$ be a maximal torus in $G$ and $\Psi\subset \Phi$ be a closed subsystem with base $\Delta_\Psi\subset\widetilde{\Delta}$. 

If $\Phi$ is of classical type, then $G_\Psi$ is the connected centralizer of an element in $T$ if and only if it is the Levi subgroup of a parabolic subgroup of $G$, \cite{carter-classical}.

If $\Phi$ is of  exceptional type, then $G_\Psi$ is  the connected centralizer of an element in $T$  unless $G,\,p$ and $\Psi$ occur in Table \ref{tab:table}.
\end{proposition}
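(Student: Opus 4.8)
The plan is to read off the answer from the numerical criterion of Proposition~\ref{prop:centralizer} together with the classification of $W$-conjugacy classes of subsets of $\widetilde\Delta$. The starting point is the reformulation: for a closed subsystem $\Psi$ with base $\Delta_\Psi\subset\widetilde\Delta$, Proposition~\ref{prop:centralizer} says that $G_\Psi$ is the connected centralizer of an element of $T$ if and only if $p\nmid d_{\Delta_\Psi}$, where $d_{\Delta_\Psi}=\gcd(d_i\mid\alpha_i\in\widetilde\Delta\setminus\Delta_\Psi)$. Since $d_0=1$, one has $d_{\Delta_\Psi}=1$ as soon as $\alpha_0\notin\Delta_\Psi$, so the only possible exceptions are the $\Psi$ whose every $\widetilde\Delta$-base contains $\alpha_0$, that is, those not $W$-conjugate to a standard Levi. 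Moreover $d_{\Delta_\Psi}$ depends only on the $W$-conjugacy class of $\Psi$, being the order of the torsion subgroup of $\mathbb{Z}\Phi/\mathbb{Z}\Psi$ (Proposition~\ref{prop:mc-sommers}(a)); hence it suffices to test one representative per class.

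For the classical types I would simply invoke \cite{carter-classical}: when $\Phi$ is of type $B_n$, $C_n$ or $D_n$ and $p=2$, the connected centralizers of semisimple elements are exactly the Levi subgroups of parabolic subgroups. This is consistent with the criterion, since every non-Levi subsystem with base in $\widetilde\Delta$ has $d_{\Delta_\Psi}=2$ and is therefore excluded when $p=2$.

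The exceptional types require a finite, type-by-type inspection. I would list the proper subsets $S\subset\widetilde\Delta$ up to $W$-conjugacy following \cite[\S2.2]{sommers}, separating in types $E_7$ and $E_8$ the abstractly isomorphic subsystems by root lengths and by Dynkin's prime/double-prime convention, and for each class read off $d_S$ from the affine marks $d_i$. The bad primes to be tested are $p=2,3$ for $G_2$, $F_4$, $E_6$, $E_7$ and $p=2,3,5$ for $E_8$. By the criterion the exceptions are precisely the classes with $p\mid d_S$; for example in $G_2$ the class of type $A_1+\widetilde A_1$ has $d_S=2$ and the class of type $A_2$ has $d_S=3$, so these fail to be connected centralizers exactly when $p=2$ and $p=3$ respectively. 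Gathering all such triples $(G,p,\Psi)$ yields Table~\ref{tab:table}.

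The main obstacle is the completeness and accuracy of this exceptional enumeration rather than any single computation: one must guarantee that the list of $W$-conjugacy classes of subsets of $\widetilde\Delta$ is exhaustive, and that subsystems of the same isomorphism type lying in genuinely distinct $W$-classes --- and thus possibly with different torsion, hence different $d_S$ --- are kept apart. This is exactly where the decorations imported from \cite{sommers} are indispensable; once the classes are correctly separated, evaluating $d_S$ and comparing with the relevant bad prime is routine.
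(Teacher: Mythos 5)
Your proposal is correct and follows essentially the same route as the paper: the paper obtains the table by applying the numerical criterion of Proposition~\ref{prop:centralizer} (i.e.\ $p\nmid d_{\Delta_\Psi}$) to the list of $W$-conjugacy classes of subsets of $\widetilde{\Delta}$ from \cite[\S 2.2]{sommers}, with Dynkin's prime/double-prime decorations to separate non-conjugate subsystems of the same type, and invokes \cite{carter-classical} for the classical types. Your additional observation that $d_{\Delta_\Psi}$ is a $W$-invariant (being the order of the torsion subgroup of ${\mathbb Z}\Phi/{\mathbb Z}\Psi$), so that the criterion is well defined on conjugacy classes, is a worthwhile point that the paper leaves implicit.
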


\begin{table}[H]
\centering
{\renewcommand{\arraystretch}{1.3}
\begin{tabularx}{\textwidth}{|l|l|X|}
\hline
$G$                       & $p$                & $\Psi$\\ \hline
$ E_6 $ &
$2$
& 
 $4A_1$, $A_3 + 2A_1$, $A_5 + A_1$ \\ \hline
$ E_6 $ &
$3$
&
 $3A_2$ \\ \hline
{$ E_7 $} &
{$2$} & 
 $(4A_1)'$, $(A_3 + 2A_1)'$, $(A_5+ A_1)'$, $A_7$, $5A_1$, $A_3 + 3A_1$, $2A_3$, $D_4 + 2A_1$, $D_6 + A_1$, $2A_3 + A_1$ \\ \hline
$ E_7 $ &
$3$
& 
 $3A_2$, $3A_2 + A_5$ \\ \hline
$ E_8 $ &
$2$& 
$(4A_1)''$, $(A_3 + 2A_1)''$, $(2A_3)''$, $(A_1 + A_5)''$, $A''_7$, $5A_1$, $3A_1 + A_3$, $2A_1+ D_4$, $4A_1 + A_2$, $2A_1 + A_5$, $A_3 + D_4$, $A_1 + D_6$, $A_1 + 2A_3$, $ 2A_1 + A_2 + A_3$, $2A_1 + D_5$, $D_8$,
$A_1 + A_7$, $A_1 + A_2 + A_5$, $A_3 + D_5$, $A_1 + E_7$ \\ \hline
$ E_8 $ &
$3$
&
 $3A_2$, $A_2 + A_5$, $3A_2 + A_1$, $ A_8$, $A_1 + A_ 2 + A_5$, $A_2 + E_6$ \\ \hline
$ E_8 $ &
$5$
& 

 $2A_4$ \\ \hline
$ F_4 $ &
$2$
&
 $2 A_1$, $ 2A_1 + \tilde A_1$, $A_1 + B_2$, $A_3$, $A_1 + C_3$, $A_3 + \tilde A_1$,
$B_4$ \\ \hline
$ F_4 $ &
$3$
& 
 $A_2 + \tilde A_2$ \\ \hline
$ G_2 $ &
$2$
& 
 $A_1+ \tilde A_1$ \\ \hline
$ G_2 $ &
$3$
& 
 $A_2$ \\ \hline
  \end{tabularx}}
  \caption{\label{tab:table}Root subsystems to be discarded }
\end{table}

\section{Acknowledgements}
We wish to  thank Prof. G. Lusztig for asking the question leading to this short note.  The authors acknowledge support by:  DOR2207212/22 ``Algebre di Nichols, algebre di Hopf e gruppi algebrici" and  BIRD203834 ``Grassmannians, flag varieties and their generalizations." funded by the University of Padova. They are members of the  INdAM group GNSAGA.

\end{document}